\theoremstyle{plain}
\newtheorem{thm}{Theorem}[section]
\newtheorem{lem}[thm]{Lemma}
\newtheorem{prop}[thm]{Proposition}
\newtheorem{ques}[thm]{Question}
\theoremstyle{definition}
\newtheorem{defn}[thm]{Definition}
\newtheorem{ex}[thm]{Example}
\theoremstyle{remark}
\newtheorem{rem}[thm]{Remark}
\def\ds{\displaystyle}
\DeclareMathOperator{\N}{\mathbb N}
\DeclareMathOperator{\dd}{d}
\DeclareMathOperator{\dx}{d_{\max}}
\newcommand{\pre}{\preceq}
\newcommand{\st}{\ensuremath{\, | \,}}
\newcommand{\Sgen}{S=\langle a_1,...,a_t \rangle}
\def\red#1 {\textcolor{red}{#1 }}
\def\blue#1 {\textcolor{blue}{#1 }}
\begin{document}

\title{Maximal Denumerant of a Numerical Semigroup with Embedding Dimension Less Than Four}
\author{Lance Bryant, James Hamblin, and Lenny Jones}
\date{}
\maketitle

\begin{abstract}
\noindent Given a numerical semigroup $S = \langle a_1, a_2, \ldots, a_t \rangle$ and $s\in S$, we consider the factorization $s = c_1 a_1 + c_2 a_2 + \cdots + c_t a_t$ where $c_i\ge0$.  Such a factorization is {\em maximal} if $c_1+c_2+\cdots+c_t$ is a maximum over all such factorizations of $s$. We show that the number of maximal factorizations, varying over the elements in $S$, is always bounded. Thus, we define $\dx(S)$ to be the maximum number of maximal factorizations of elements in $S$. We study maximal factorizations in depth when $S$ has embedding dimension less than four, and establish formulas for $\dx(S)$ in this case.
\end{abstract}

\section{Introduction}

Let $\N$ denote the nonnegative integers. A {\em numerical semigroup} $S$ is a subsemigroup of $\N$ that contains 0 and has a finite complement in $\N$. For two elements $u$ and $u'$ in $S$, $u \pre u'$ if there exists an $s\in S$ such that $u+s = u'$. This defines a partial ordering on $S$. The minimal elements in $S\setminus\{0\}$ with respect to this ordering form the unique {\em minimal set of generators of $S$}, which is denoted by $\{a_1,a_2,\dots, a_t\}$ where $a_1<a_2<\dots<a_t$.  {The numerical semigroup} $S=\{\sum_{i=1}^{t} c_ia_i \st c_i\ge 0\}$ is represented using the notation $\Sgen$. Since the minimal generators of $S$ are distinct modulo $a_1$, the set of minimal generators is finite. Furthermore, $S$ having finite complement in $\N$ is equivalent to  {  $\gcd\left(a_1,a_2,\ldots ,a_t\right)= 1$. } The cardinality, $t$, of the set of minimal generators of a semigroup $S$ is called the {\em embedding dimension of $S$}. The element $a_1$ is called the {\em multiplicity} of $S$. When $S\ne \N$, we have $2\le t\le a_1$. %When $\nu(S)= e_0(S)$, $S$ is said to have {\em maximal embedding dimension}.

By definition, if $s\in S$, then there exists a $t$-tuple of nonnegative integers $(c_1,c_2,\dots,c_t)$ such that $s = c_1 a_1 + c_2 a_2 + \cdots + c_t a_t$. We call $(c_1,c_2,\dots,c_t)$ a {\em factorization} of $s$. For two factorizations $(c_1,c_2,\dots,c_t)$ and $(d_1,d_2,\dots,d_t)$ of $s$, we say they are different if $c_i\ne d_i$ for some $1\le i\le t$. The {\em length} of a factorization $(c_1,c_2,\dots,c_t)$ is defined as $c_1 + c_2 + \cdots + c_t$. The set of factorizations of $s$, denoted by $\mathcal F(s)$, is precisely the set of nonnegative integer solutions of the equation $x_1 a_1 + x_2 a_2 + \cdots + x_t a_t=s$ and is therefore finite.

A basic arithmetic constant that measures the behavior of factorizations in a numerical semigroup is the cardinality of $\mathcal F(s)$, which is called the {\em denumerant} of $s$ in $S$. See \cite{R} for an exhaustive view of results related to the denumerant. Recently, there has been interest in the factorization theory of numerical semigroups and the insight it provides into the general theory of commutative monoids; for example, see \cite{ACHP, AG1, AG2, BCKR, CDHK, CGLM, CHK}. Here we consider a variation of the denumerant.

%and we will denote it by $\dd(s;S)$

\begin{defn}
The {\em maximal denumerant} of $s$ in $S$ is the number of factorizations of $s$ that have maximal length and is denoted by $\dd_{\max}(s; S)$. %Likewise, the {\em minimal denumerant} of $s$ in $S$ is the number of factorizations of $s$ that have minimal length and is denoted by $\dd_{\min}(s; S)$.
\end{defn}

Certainly, the maximal denumerant of $s$ in $S$ is less than or equal to its denumerant in $S$, and thus also finite. On the other hand, unlike the denumerant, as we vary over the elements in $S$, the maximal denumerant is always bounded. This is not difficult to see and will be proven in Theorem \ref{thm1}. Thus, we have the well-defined quantity given in the next definition.

\begin{defn}
The {\em maximal denumerant} of $S$ is $$\dd_{\max}(S)=\max_{s\in S}\{\dd_{\max}(s; S)\}.$$
\end{defn}

We will focus on computing $\dd_{\max}(S)$ when $S$ can be generated by three elements; in particular, when $S$ has embedding dimension three. When $S$ is (perhaps non-minimally) generated by $a_1$, $a_2$, and $a_3$, by letting $g=\gcd(a_2-a_1,a_3-a_1)$, $m=(a_2-a_1)/g$, and $n=(a_3-a_1)/g$, we can write
$$S=\langle a_1,a_1+ gm,a_1+ gn\rangle,$$
which leads to unexpectedly nice formulas. Theorems \ref{thm2} and \ref{thm3} will be proven in Section \ref{dx}.\\

\noindent {\bf Theorem 3.5.  }{\it
Let $0\le \alpha < mn$ such that $\alpha \equiv -a_1  \,\,\mathrm{mod} \,\,mn$. We have the following formula:
$$ \dx(S) = \begin{cases} \ds
\left\lceil\frac{a_1}{mn}\right\rceil, & \ds  \text{if } \alpha \in  \langle m,n\rangle \\ \\ \ds
\left\lceil\frac{a_1}{mn}\right\rceil +1, & \text{otherwise.}
\end{cases}$$ }

\noindent {\bf Theorem 3.6.  }{\it
If $x$ and $y$ are integers such that $mx+ny=a_1$, then we have the following formula:
$$\ds \dd_{\max}(S) = \left\lceil \frac{x}{n}\right\rceil + \left\lceil \frac{y}{m} \right\rceil.$$
 }

The motivation for such a variation of the denumerant is the consideration of length-preserving restrictions. For example, perhaps we are interested in factorizations of an element that have either maximal or minimal length. This might happen when working with the numerical semigroup ring $R = k[[t^{a_1}, t^{a_2},\dots,t^{a_d}]]$, where $k$ is a field and $\mathbf m = (t^{a_1}, t^{a_2},\dots,t^{a_d})R$ is the unique maximal ideal. In this case, the maximal length of the factorizations of $s\in S$ is the $\mathbf m$-adic order of $t^s \in R$, i.e., the largest power of $\mathbf m$ that contains $t^s$, see \cite{B}. Another instance occurs in money-changing problems where the minimal length of the factorizations of $s\in S$ is the fewest number of coins needed to make change for $s$ using the denominations $a_1$, $a_2$, \dots, $a_t$, see \cite{BHJ}. Of course, the overarching concern is changing from one factorization to another in a numerical semigroup. The minimal presentation of a numerical semigroup (see \cite{RG}) is helpful when studying all factorizations; however, we note that it is not as useful for our current endeavor because these ``basic trades" do not generally preserve length. With the appropriate modifications, an approach via a minimal presentation may be fruitful and we leave this as an avenue for further research.

In the next section, we show that the maximal denumerant is always finite, and that for semigroups with embedding dimension less than three, $\dx(S) = 1$. In Section 3, we focus on numerical semigroups  with embedding dimension exactly equal to three. In the last section, we demonstrate the utility of our results by explicitly computing the maximal denumerant of semigroups with multiplicity 7 and embedding dimension 3.

\section{The Finiteness of $\dx(S)$}

For a given numerical semigroup $S=\langle a_1,a_2,\dots,a_t\rangle$, we need only find a finite set $U\subset S$ such that $\dd_{\max}(S)=\max_{s\in U}\{\dd_{\max}(s; S)\}$ to establish the finiteness of $\dx(S)$. To this end, we make the following definition:

%To show the finiteness of $\dx(S)$ for a given numerical semigroup $S=\langle a_1,a_2,\dots,a_t\rangle$, we need only find a finite set $U\subset S$ such that $\dd_{\max}(S)=\max_{s\in U}\{\dd_{\max}(s; S)\}.$

\begin{defn}
An element $u\in S$ is called {\em maximally reduced} if, for each $i$, with $1\le i\le t$, there exists a factorization $(c_1,c_2,\dots,c_t)$ of $u$ with maximal length such that $c_i=0$.
\end{defn}

We do not need $t$ distinct factorizations with maximal length to satisfy the definition of maximally reduced, as the next example shows.

\begin{ex}
In $S=\langle 7,8,13\rangle$, the element $48$ has the following factorizations:
\begin{itemize}
\item $(0,6,0)$
\item $(5,0,1)$
\item $(2,1,2)$
\end{itemize}
Notice that only the first two have maximal length. The first factorization with maximal length has a 0 in the first and third entries, and the second factorization with maximal length has a 0 in the second entry. Thus $48$ is a maximally reduced element.
\end{ex}

\begin{thm}\label{thm1}
Let $U$ be the set of maximally reduced elements in $S$. Then we have the following:
\begin{enumerate}
\item $U$ is a finite set
\item $\dd_{\max}(S)=\max_{s\in U}\{\dd_{\max}(s; S)\}.$
\end{enumerate}
Thus, $\dd_{\max}(S)$ is finite.
\end{thm}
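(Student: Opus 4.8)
The plan is to prove the two claims in order, since the finiteness of $\dd_{\max}(S)$ follows immediately from them: every element's maximal denumerant is achieved by reducing to a maximally reduced element, and there are only finitely many of those.

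For part (1), I would show that the first coordinate of any maximally reduced element's factorizations is bounded. Let $u \in U$ be maximally reduced. By definition, taking $i = 1$, there is a maximal-length factorization $(c_1, c_2, \dots, c_t)$ of $u$ with $c_1 = 0$. I claim that no $c_i$ can be too large. The key tool is the following standard observation: if any maximal-length factorization $(c_1,\dots,c_t)$ satisfies $c_j \ge a_1$ for some $j \ge 2$, then we could rewrite $a_1$ copies of $a_j$ as $a_j$ copies of $a_1$ (i.e. replace the summand $a_1 a_j$ by $a_j a_1$); this changes the length by $a_j - a_1 > 0$, contradicting maximality. Hence in every maximal-length factorization we must have $c_j < a_1$ for all $j \ge 2$. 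Combined with $c_1 = 0$ in the reduced factorization witnessing $i=1$, this bounds $u = \sum_{j \ge 2} c_j a_j \le a_1 \sum_{j\ge 2} a_j$, so $U$ is contained in a finite set and is therefore finite.

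For part (2), the inequality $\max_{s \in U}\{\dd_{\max}(s;S)\} \le \dd_{\max}(S)$ is trivial since $U \subseteq S$. For the reverse, I would take an arbitrary $s \in S$ and produce a maximally reduced $u \in U$ with $\dd_{\max}(u;S) \ge \dd_{\max}(s;S)$. The idea is a reduction: if $s$ is not maximally reduced, then for some index $i$ every maximal-length factorization has $c_i \ge 1$, which means I can subtract $a_i$ from $s$. The resulting element $s' = s - a_i$ should have a length-preserving bijection between the maximal-length factorizations of $s$ and those of $s'$ (subtract one from the $i$th coordinate), so $\dd_{\max}(s';S) = \dd_{\max}(s;S)$. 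Iterating this strictly-decreasing process terminates at a maximally reduced element $u$ with the same maximal denumerant.

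The main obstacle is justifying the reduction step in part (2) rigorously, specifically verifying that subtracting $a_i$ induces a genuine bijection on \emph{maximal-length} factorizations and does not create new ones of the same length. I must confirm that if every maximal-length factorization of $s$ uses $a_i$ at least once, then the maximal length of $s'$ is exactly one less than that of $s$, and that maximal-length factorizations of $s'$ correspond bijectively (via adding one to the $i$th coordinate) to those of $s$; the delicate point is ruling out a longer factorization of $s'$ appearing after the subtraction, which I would handle by noting that any factorization of $s'$ yields one of $s$ of length one greater by incrementing the $i$th coordinate, so the maximal length of $s'$ cannot exceed that of $s$ minus one.
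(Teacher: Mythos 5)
Your proposal is correct and follows essentially the same approach as the paper: the finiteness of $U$ comes from the same trade (replacing $a_1$ copies of $a_j$ by $a_j$ copies of $a_1$ to lengthen any maximal factorization with first coordinate zero and some $c_j\ge a_1$), and the reduction in part (2) is the paper's argument carried out one generator at a time rather than in a single step by subtracting $\sum_i c_i a_i$ with $c_i$ the coordinatewise minimum over all maximal factorizations. Your explicit justification of the length-preserving bijection is exactly the detail the paper leaves to the reader with ``it is not difficult to see.''
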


\begin{proof}
To show that $U$ is a finite set, it suffices to prove that the maximally reduced elements are bounded above. To see this, set  {$N=(a_1-1)\sum_{i=2}^ta_i$}. Suppose that $s>N$, and  {that $C=(0,c_2,\dots,c_t)$ is a representation of $s$}. Then there exists  {$j$, with $2\le j\le t$,} such that $c_j \ge a_1$, and so $(a_j,c_2,\dots,c_j-a_1,\dots,c_t)$ is a representation of $s$ with greater length  {than $C$}. Therefore, every maximal representation has a first component that is nonzero and $s$ is not maximally reduced.

Now we need to show that $\dd_{\max}(S)=\max_{s\in U}\{\dd_{\max}(s; S)\}$. For $s\in S$, with maximal representations $\{C_j = (c_{j,1}, c_{j,2},\dots,c_{j,t})\}$, let $c_i=\min_j\{c_{j,i}\}$, and consider the element $s^*=s-\sum_{i=1}^t c_ia_i\in S$. Then it is not difficult to see that $s^*$ is maximally reduced and that $\dd_{\max}(s;S) =\dd_{\max}(s^*;S) $.
\end{proof}

Theorem \ref{thm1} outlines an algorithm for computing $\dd_{\max}(S)$: We check to see which elements up to $N=(a_1-1)\sum_{i=2}^ta_i$ are maximally reduced, and then take the maximum of the $\dd_{\max}(s; S)$ where $s$ is a maximally reduced element of $S$.

\begin{ex}
Let $S=\langle 7, 11, 13, 15\rangle$. Checking up to 234, the maximally reduced elements along with their maximal factorizations are
\begin{itemize}
\item $0$; $(0,0,0,0)$
\item $22$; $(0,2,0,0)$, $(1,0,0,1)$
\item $26$; $(0,0,2,0)$, $(0,1,0,1)$
\item $33$; $(0,3,0,0)$, $(1,0,2,0)$, $(1,1,0,1)$
\item $37$; $(0,1,2,0)$, $(0,2,0,1)$, $(1,0,0,2)$
\item $44$; $(1,2,0,1)$, $(1,1,2,0)$, $(0,4,0,0)$, $(2,0,0,2)$.
\end{itemize}
Therefore, $\dd_{\max}(S) = 4$.
\end{ex}

We see from the example that we can potentially improve this algorithm since we only need to check up to 44 to find the maximally reduced elements. We leave this improvement as an open question.

\begin{ques}
Can we improve the algorithm described in Theorem \ref{thm1}?
\end{ques}

In the next section we will focus on numerical semigroups with embedding dimension less than four, but first we consider the case when $S$ has embedding dimension strictly less than three. When $S=\mathbb N$, then every element $s\in S$ has a unique factorization, namely, $s = s \cdot 1$. Thus $\dx(\mathbb N) = 1$. We show that when $S$ has embedding dimension two, we also have that $\dx(S) = 1$.

\begin{prop}\label{for 2}
If $S=\langle a_1, a_2\rangle$, then $\dx(S)=1$.
\end{prop}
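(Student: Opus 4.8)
The plan is to prove something slightly stronger than $\dx(S)=1$: that in a semigroup of embedding dimension two, \emph{no two distinct factorizations of the same element have equal length}. Granting this, the finiteness of $\mathcal F(s)$ (noted in the introduction) guarantees that a factorization of maximal length exists, while injectivity of the length function on $\mathcal F(s)$ forces it to be unique. Hence $\dd_{\max}(s;S)=1$ for every $s\in S$, and taking the maximum over $s$ yields $\dx(S)=1$.

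The key step is to describe the difference between two factorizations explicitly. Suppose $(c_1,c_2)$ and $(d_1,d_2)$ both factor the same $s\in S$. Subtracting the two defining equations gives $(c_1-d_1)a_1=(d_2-c_2)a_2$. Since $\gcd(a_1,a_2)=1$ for a numerical semigroup, $a_2$ divides $c_1-d_1$ and $a_1$ divides $d_2-c_2$, so there is an integer $k$ with $c_1-d_1=ka_2$ and $c_2-d_2=-ka_1$. In other words, any two factorizations of the same element differ by an integer multiple of the vector $(a_2,-a_1)$.

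Now I would compare lengths directly. The length difference between the two factorizations is
$$(c_1+c_2)-(d_1+d_2)=ka_2-ka_1=k(a_2-a_1).$$
Because the embedding dimension is two, $a_1$ and $a_2$ are distinct minimal generators with $a_1<a_2$, so $a_2-a_1>0$. Thus the length difference vanishes precisely when $k=0$, i.e.\ only when the two factorizations coincide. This establishes the claimed injectivity of length on $\mathcal F(s)$, and the proposition follows.

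There is no real obstacle here; the only point that genuinely uses embedding dimension two is the last one. The lattice of relations among factorizations is one-dimensional and generated by $(a_2,-a_1)$, whose coordinates have opposite signs and unequal magnitudes, so moving along it strictly changes length. (For $t\ge 3$ the relation lattice has higher rank and may contain length-preserving relations, which is exactly why $\dx(S)$ can then exceed $1$.) One could alternatively package the argument by writing $\mathcal F(s)=\{(c_1+ka_2,\,c_2-ka_1)\}$ over the admissible range of $k$ and observing that length is a strictly increasing affine function of $k$, so its maximum is attained at the unique largest admissible $k$.
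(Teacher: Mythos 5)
Your proof is correct, and it establishes the same strengthened claim the paper does---that in embedding dimension two no element has two distinct factorizations of equal length---but by a different mechanism. The paper argues by contradiction with a direct inequality: if $(c_1,c_2)\ne(d_1,d_2)$ have equal length and, say, $c_1>d_1$, then from $(c_1-d_1)a_1+c_2a_2=d_2a_2$ and $a_1<a_2$ one gets $(c_1-d_1)a_1+c_2a_2<(c_1-d_1+c_2)a_2=d_2a_2$, a contradiction; that argument never invokes $\gcd(a_1,a_2)=1$ and uses only $a_1<a_2$. You instead use coprimality to pin down the kernel lattice: any two factorizations of $s$ differ by an integer multiple of $(a_2,-a_1)$, and length changes by $k(a_2-a_1)\ne 0$ along that lattice. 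Your route needs the gcd hypothesis (which does hold, since a numerical semigroup has $\gcd(a_1,a_2)=1$), so it is marginally less elementary, but it is more structural: it parametrizes all of $\mathcal F(s)$ at once, shows length is strictly monotone in the lattice parameter, and makes transparent why the conclusion fails for $t\ge 3$, where the relation lattice has rank at least two and can contain length-preserving vectors. Both proofs are complete and correct.
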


\begin{proof}
We will show that every element of $S$ has only one maximal factorization. Suppose that
\begin{align*}s=c_1a_1+c_2a_2 &\quad\text{ and }\quad s=d_1a_1+d_2a_2,\end{align*}
where $c_1+c_2=d_1+d_2$. If $c_1=d_1$ or $c_2=d_2$, then it follows that both $c_1=d_1$ and $c_2=d_2$. If this is not the case, then we may assume without loss of generality that $c_1>d_1$. But then we have $(c_1-d_1)a_1+c_2a_2=d_2a_2$ and
\begin{align*}
(c_1-d_1)a_1+c_2a_2 &< (c_1-d_1)a_2+c_2a_2\\
&= (c_1-d_1+c_2)a_2\\
&= d_2a_2.
\end{align*}
 This is a contradiction. Since we cannot have two factorizations of an element of $S$ with the same length, we certainly cannot have two with maximal length.
 \end{proof}

\section{The maximal denumerant of a semigroup with embedding dimension less than four}\label{dx}

%The key results in this section are Theorems \ref{prop ms char} and \ref{thm main1}. Theorem \ref{prop ms char} refines the relationship between $\dx(S)$ and the maximally reduced elements of $S$ when $S$ is 3-generated. This enables us to establish the formula for computing $\dx(S)$ given in Theorem \ref{thm main1}.

Throughout this section, unless otherwise stated, we assume that $S=\langle a_1,a_2,a_3\rangle$ is a numerical semigroup with embedding dimension three.  Set $g=\gcd(a_2-a_1,a_3-a_1)$,  $m=(a_2-a_1)/g$ and $n=(a_3-a_1)/g$. Then
\begin{eqnarray}\label{set} S=\langle a_1,a_1+mg,a_1+ng\rangle,\end{eqnarray}
where  {$\gcd(m,n)=\gcd(a_1,g)=1$.}
In the following lemma, we determine the maximally reduced elements of $S$ and their maximal factorizations.

\begin{lem}\label{thm emb3 red}
 {Let $s$ be} a maximally reduced element of $S$. Then $s$ is a multiple of $n a_2$. Moreover, if $s=kn a_2$, then $\{pU+qV \st p,q\ge0$ and $p+q=k\}$ is the set of maximal factorizations of $s$ where $U=(0,n,0)$ and $V=(n-m,0,m)$ (using the standard addition and scalar multiplication of vectors).
 \end{lem}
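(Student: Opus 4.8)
The plan is to linearize the two quantities that matter. For a factorization $(c_1,c_2,c_3)$ of $s$, record its length $\ell = c_1+c_2+c_3$ and introduce the \emph{weight} $w = c_2 m + c_3 n$. Substituting $a_2 = a_1+mg$ and $a_3 = a_1+ng$ collapses the defining equation into the single identity $s = \ell a_1 + w g$. Since $s$ is fixed, $\ell$ and $w$ are tied by an affine relation, so a factorization has maximal length exactly when it has minimal weight; in particular every maximal factorization of $s$ shares one common length $\ell^{\ast}$ and one common weight $w^{\ast} = (s-\ell^{\ast} a_1)/g$. This observation — that maximality is detected through the single number $w$, and that all maximal factorizations carry the same $w^{\ast}$ — is the conceptual key, and pinning it down is the step I expect to carry the most weight; everything afterward is linear algebra in nonnegative integers, with no Frobenius-type estimate needed.

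For the first assertion, suppose $s$ is maximally reduced, so there are maximal factorizations $A,B,C$ having a zero in the first, second, and third coordinate respectively. Reading off weights, $C = (\gamma_1,\gamma_2,0)$ gives $w^{\ast} = \gamma_2 m$, so $m \mid w^{\ast}$, while $B = (\beta_1,0,\beta_3)$ gives $w^{\ast} = \beta_3 n$, so $n \mid w^{\ast}$. As $\gcd(m,n)=1$ this forces $mn \mid w^{\ast}$, and I write $w^{\ast} = Kmn$. Then $C$ has $\gamma_2 = Kn$ and $\gamma_1 = \ell^{\ast} - Kn \ge 0$, giving $\ell^{\ast} \ge Kn$. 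For the reverse inequality I use $A = (0,\alpha_2,\alpha_3)$: solving $\alpha_2+\alpha_3 = \ell^{\ast}$ together with $\alpha_2 m + \alpha_3 n = Kmn$ yields $\alpha_3 = m(Kn-\ell^{\ast})/(n-m)$, and $\alpha_3 \ge 0$ forces $\ell^{\ast} \le Kn$. Hence $\ell^{\ast} = Kn$, and substituting back gives $s = \ell^{\ast} a_1 + w^{\ast} g = Kn(a_1+mg) = Kn\,a_2$, so $s$ is a multiple of $n a_2$ (the integer $k$ of the statement being this $K$). I find it satisfying that this argument uses all three vanishing conditions from the definition of maximally reduced in essentially distinct ways.

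For the ``moreover'' clause I now know the maximal factorizations of $s = kn a_2$ are precisely its factorizations of length $\ell^{\ast} = kn$, and any such automatically has weight $kmn$ (since $w = (s-kn\,a_1)/g = kmn$). So I simply enumerate the nonnegative integer solutions of $c_1+c_2+c_3 = kn$ and $c_2 m + c_3 n = kmn$. Using $\gcd(m,n)=1$, the general solution of the weight equation is $c_2 = nt$, $c_3 = m(k-t)$, which then forces $c_1 = (k-t)(n-m)$; the constraint $c_1 \ge 0$ reduces to $0 \le t \le k$ and so is automatic. Writing $p = t$ and $q = k-t$ identifies each solution with $pU + qV$, where $U=(0,n,0)$ and $V=(n-m,0,m)$ and $p+q=k$, which is exactly the claimed set. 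It is worth recording at the outset that $U$ and $V$ are genuine factorizations of $n a_2$ of equal length $n$ — that is, $n a_2 = (n-m)a_1 + m a_3$ — both to motivate the two vectors and to keep the length bookkeeping transparent.

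In short, once the weight function is in place and the bijection between ``maximal length'' and ``minimal weight'' is justified, the proof splits cleanly: a divisibility step ($mn \mid w^{\ast}$), a two-sided length bound ($\ell^{\ast} = Kn$) coming from the three coordinate-vanishing factorizations, and a one-parameter solution count that reassembles into the family $pU+qV$. The single delicate point is the initial reduction; the remainder is mechanical.
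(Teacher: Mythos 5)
Your proof is correct. It runs on the same algebraic engine as the paper's: substituting $a_2=a_1+mg$, $a_3=a_1+ng$ to get $s=\ell a_1+wg$ with $w=c_2m+c_3n$ is exactly what the authors do each time they subtract the length equation from the factorization equation and divide by $g$. The genuine difference is in how the first assertion is organized. The paper first proves that the maximal factorization with vanishing first coordinate must actually be $(0,d_2,0)$, via the order inequality $d_2a_2+d_3a_3>(d_2+d_3)a_2$, and only then extracts $n\mid d_2$ by comparing with $(e_1,0,e_3)$; you skip that step entirely, reading $mn\mid w^{\ast}$ off the two factorizations with vanishing second and third coordinates and then pinning $\ell^{\ast}=Kn$ by a two-sided bound (lower from $\gamma_1\ge 0$, upper from $\alpha_3\ge 0$, using $1\le m<n$ so that $n-m>0$). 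Your packaging has the advantage that the single invariant pair $(\ell^{\ast},w^{\ast})$ shared by all maximal factorizations does all the work uniformly, including the final enumeration, which coincides with the paper's computation of $c_2=(k-k')n$, $c_3=k'm$, $c_1=k'(n-m)$. The only things worth flagging explicitly are the standing facts $g\ge 1$ and $m<n$, which you use to make the length-to-weight correspondence order-reversing and the denominator $n-m$ positive; with those stated, the argument is complete.
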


\begin{proof}
The element $s=0$, which is always maximally reduced, has the unique (maximal) factorization $(0,0,0)$. Certainly, $s$ is a multiple of $n a_2$. It is also easy to verify that the rest of the theorem is satisfied in this case. Now we assume that $s>0$. Since $s$ is maximally reduced, there exists a maximal representation of $s$ with the first component equal to 0, say $D= (0,d_2,d_3)$. Suppose that $d_3\ne 0$.  {Since }there exists another maximal factorization {$C=(c_1,c_2,0)$} of $s,$
we have
\[s=d_2a_2+d_3a_3 > (d_2+d_3)a_2 = (c_1+c_2)a_2 \ge c_1a_1+c_2a_2 = s,\]
which is impossible. Thus, $d_3=0$ and $D=(0,d_2,0)$.

The element $s> 0$ also has another maximal factorization $E=(e_1,0,e_3)$ distinct from $D$. We now have
\begin{eqnarray}\label{eq1}
d_2a_2 &=& e_1a_1+e_3a_3,
\end{eqnarray}
and
\begin{eqnarray}\label{eq2}
d_2a_1 &=& e_1a_1+e_3a_1.
\end{eqnarray}
Subtracting Equation \ref{eq2} from Equation \ref{eq1} and dividing by $g$ yields $m d_2=n e_3$. Since $m$ and $n$ are relatively prime, we have that $d_2=kn$ for some $k>0$, and so $s=d_2a_2=kn a_2$. Therefore, $s$ is a multiple of $n a_2$.

Next we show that $\{pU+qV \st p,q\ge0$ and $p+q=k\}$ is the set of maximal representations of the maximally reduced element $s=kn a_2$. Notice that our proof of the first statement of the theorem shows that $kU=(0,kn,0)$ is a maximal factorization of $s$. It is not difficult to see that $pU+qV$, where $p,q\ge0$ and $p+q=k$, is also a factorization of $s$ having the same length as $kU$. Thus, all of these factorizations are maximal. We still need to show that no other maximal factorizations exist.

Let $C=(c_1,c_2,c_3)$ be a maximal factorization of $s$. Similar to before, using that $(0,kn,0)$ is a maximal factorization, we have
\begin{eqnarray}\label{eq4}
(kn-c_2)a_2 &=& c_1a_1+c_3a_3,
\end{eqnarray}
and
\begin{eqnarray}\label{eq5}
(kn-c_2)a_1 &=& c_1a_1+c_3a_1.
\end{eqnarray}
Subtracting Equation (\ref{eq4}) from Equation (\ref{eq5}) and dividing by $g$ yields $m (kn-c_2)=n c_3$. Since $m$ and $n$ are relatively prime, we have that $kn-c_2=k'n$ and $c_3 = k'm$ for some $0\le k'\le k$. It follows that $c_2 = (k-k')n$ and $c_1=k'(n-m)$. Therefore, we have that $C=(k-k')U + k'V$.
\end{proof}

From Lemma \ref{thm emb3 red}, we can precisely describe the set of maximally reduced elements. This is the content of the next theorem.

\begin{thm}\label{cor1}
There exists an integer $k\ge 0$ such that $\{0,na_2,2na_2,\dots,kna_2\}$ is the set of maximally reduced elements in $S$. Furthermore, $\dx(ina_2;S) = i+1$ for $0\le i\le k$.
\end{thm}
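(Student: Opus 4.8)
The plan is to read off both assertions from the structural description in Lemma \ref{thm emb3 red}, using Theorem \ref{thm1} only for finiteness, with the one substantive step being that the maximally reduced elements form an initial segment of the multiples of $na_2$. I would first settle the count $\dd_{\max}(ina_2;S)=i+1$. By Lemma \ref{thm emb3 red}, when $ina_2$ is maximally reduced its maximal factorizations are exactly the vectors $pU+qV=(q(n-m),\,pn,\,qm)$ with $p,q\ge 0$ and $p+q=i$. Since $a_2<a_3$ gives $m<n$, and in particular $m\ge 1$, the third coordinates $qm$ are pairwise distinct as $q$ runs through $0,1,\dots,i$; hence these $i+1$ vectors are distinct, giving $\dd_{\max}(ina_2;S)=i+1$.

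For the description of the maximally reduced set, Lemma \ref{thm emb3 red} already shows every maximally reduced element is of the form $ina_2$, and Theorem \ref{thm1} shows there are only finitely many; since $0$ is maximally reduced, I may take $k$ to be the largest index with $kna_2$ maximally reduced. It then suffices to prove downward closure: if $ina_2$ is maximally reduced and $i\ge 1$, then $(i-1)na_2$ is also maximally reduced. To this end I first note that the maximal length of the factorizations of $ina_2$ is $in$, because by Lemma \ref{thm emb3 red} every maximal factorization $pU+qV$ has length $(p+q)n=in$. Now if $F$ is any factorization of $(i-1)na_2$, then $F+U$ is a factorization of $ina_2$ (as $U$ represents $na_2$) whose length is the length of $F$ plus $n$, forcing the length of $F$ to be at most $(i-1)n$; since $(i-1)U$ attains $(i-1)n$, the maximal length of $(i-1)na_2$ is exactly $(i-1)n$. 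Finally I exhibit maximal factorizations missing each coordinate: $(i-1)U=(0,(i-1)n,0)$ has first and third coordinates zero, while $(i-1)V=((i-1)(n-m),0,(i-1)m)$ has second coordinate zero and is a legitimate nonnegative factorization of $(i-1)na_2$ because $V$ represents $na_2$ and $m<n$. Both have length $(i-1)n$ and are therefore maximal, so $(i-1)na_2$ is maximally reduced. Descending from $k$ then shows that $\{0,na_2,\dots,kna_2\}$ are all maximally reduced, while maximality of $k$ excludes larger multiples.

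I expect the downward-closure step to be the only real obstacle; everything else is bookkeeping layered on top of the lemma. The key device is passing between factorizations of consecutive multiples by adding or deleting the block $U=(0,n,0)$, which lets me transfer the maximal length of $ina_2$ to $(i-1)na_2$, and the one point demanding care is keeping track of $m<n$ so that $(i-1)V$ remains a genuine nonnegative factorization.
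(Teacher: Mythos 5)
Your proposal is correct and follows essentially the same route as the paper: downward closure via adding/removing the block $U=(0,n,0)$ to transfer maximal length between consecutive multiples of $na_2$, then exhibiting $(i-1)U$ and $(i-1)V$ as maximal factorizations covering all coordinates, with the count $i+1$ read off from Lemma \ref{thm emb3 red}. The only cosmetic difference is that the paper phrases the length-transfer step as a contradiction while you argue it directly, and you are slightly more explicit about the distinctness of the $i+1$ factorizations.
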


\begin{proof}
We already know that every maximally reduced element in $S$ is a multiple of $na_2$. Thus, for the first statement it suffices to assume that $s=hna_2$ is maximally reduced and show this implies that $s' = (h-1)na_2$ is also maximally reduced.

If $(0,(h-1)n,0)$ is not a maximal factorization of $s'$, then $s'$ has a factorization $C=(c_1,c_2,c_3)$ such that $c_1+c_2+c_3 > (h-1)n$. It follows that $(c_1,c_2+n,c_3)$ would be a factorization of $s$ with length greater than $hn$. Therefore, $(0,hn,0)$ is not a maximal factorization of $s$, and $s$ is not maximally reduced by Theorem~\ref{thm emb3 red}. From this contradiction, we conclude that indeed $(0,(h-1)n,0)$ is a maximal factorization of $s'$. Clearly, we also have that $((h-1)(n-m),0,(h-1)m)$ is a maximal factorization of $s'$ since it is a factorization with length $(h-1)n$. This shows that $s'$ is maximally reduced.

Now, if $0\le i\le k$, then $ina_2$ is maximally reduced and by Theorem~\ref{thm emb3 red}, its maximal factorizations are $\{p(0,n,0)+q(n-m,0,m) \st p,q\ge0$ and $p+q=i\}$. Since $p$ can range from 0 to i (with $q$ depending on $p$), it follows that $\dx(ina_2;S) = i+1$.
\end{proof}

%Thereom \ref{cor1} reveals that $\dx(S)$ can be obtained from the set of maximally reduced elements in other ways, as the next immediate corollary indicates.

%Combining Theorem \ref{thm1} and Thereom \ref{cor1}, we see that if $U=\{0,na_2,2na_2,\dots,kna_2\}$ is the set of maximally reduced elements in $S$, then $\dx(S)=k+1$, i.e., the largest maximal denumerant of the maximally reduced elements. This also coincides with other measurements of the set $U$, which can now be easily verified.

%\begin{cor}\label{cor2}
%Let $U=\{0,na_2,2na_2,\dots,kna_2\}$ be the set of maximally reduced elements in $S$, then we have the following:
%\begin{enumerate}
%\item $\dx(S)$ is the cardinality of $U$
%\item $\dx(S)$ is the maximal denumerant in $S$ of the largest maximally reduced element.
%\end{enumerate}
%\end{cor}

%Both statements in Corollary \ref{cor2} fail to hold if $S$ is not 3-generated.
%\begin{ex} Example showing this.
%\end{ex}

Our main results, the formulas provided in Theorems \ref{thm2} and \ref{thm3}, are both dependent upon Lemma \ref{lemforthm2}. Notice that since $m$ and $n$ are relatively prime, $U=\langle m,n\rangle$ is a semigroup with embedding dimension less than three. It is well known that $U$ is a {\it symmetric} semigroup, i.e., for every $z\in \mathbb Z$, exactly one of $z$ or $f-z$ is in $U$, where $f$ is the Frobenius number of $U$. The Frobenius number of $U$ is the largest integer not in $U$, and we have $f=mn-m-n$. See \cite{FGH, RG} for more information concerning symmetric semigroups.

\begin{lem}\label{lemforthm2}
The following are equivalent:
\begin{enumerate}
\item[(a)] $hna_2$ is not a maximally reduced element of $S$
\item[(b)] $(0,hn,0)$ is not a maximal factorization of $hna_2$
\item[(c)] $hmn - a_1 \in \langle m,n\rangle.$
\end{enumerate}
Moreover, $\dx(S) = \min\{h \st hmn - a_1 \in \langle m,n\rangle\}$.
\end{lem}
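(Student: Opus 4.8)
The plan is to establish the three-way equivalence by proving (a)$\Leftrightarrow$(b), then (c)$\Rightarrow$(b), then (b)$\Rightarrow$(c), and finally to read off the formula for $\dx(S)$ from Theorems~\ref{thm1} and \ref{cor1}. The equivalence (a)$\Leftrightarrow$(b) I expect to be immediate from Lemma~\ref{thm emb3 red}: if $hna_2$ is maximally reduced then taking $p=h$, $q=0$ there shows $(0,hn,0)$ is one of its maximal factorizations, so (b) fails; conversely, if $(0,hn,0)$ is maximal, then $(h(n-m),0,hm)$ is a factorization of $hna_2$ of the same length $hn$, hence also maximal, and the pair $\{(0,hn,0),\,(h(n-m),0,hm)\}$ places a zero in every coordinate, so $hna_2$ is maximally reduced and (a) fails. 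Thus I can freely switch between (a) and (b).

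For (c)$\Rightarrow$(b) I would argue by explicit construction. Writing $hmn-a_1=\alpha m+\beta n$ with $\alpha,\beta\ge 0$, I claim $(hn+g-\alpha-\beta,\ \alpha,\ \beta)$ is a factorization of $hna_2$. A direct substitution of $a_2=a_1+mg$ and $a_3=a_1+ng$ confirms that it represents $hna_2$ and has length $hn+g>hn$, so $(0,hn,0)$ is not maximal. The only point needing care is nonnegativity of the first coordinate, which follows from $\alpha+\beta\le (hmn-a_1)/m<hn$, using $m<n$.

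The heart of the matter is (b)$\Rightarrow$(c), and this is where I expect the main obstacle. Given any factorization $(c_1,c_2,c_3)$ of $hna_2$ with length $L>hn$, the identity $hna_2-La_1=(c_2m+c_3n)g$ together with $\gcd(a_1,g)=1$ forces $P:=c_2m+c_3n=hmn-\ell a_1$ for some integer $\ell\ge1$; since $c_2,c_3\ge0$, this says $hmn-\ell a_1\in\langle m,n\rangle$. The difficulty is that a longer factorization only yields membership for \emph{some} $\ell\ge1$, and the naive attempt to descend to $\ell=1$ fails because $(\ell-1)a_1$ need not lie in $\langle m,n\rangle$. The symmetry of $\langle m,n\rangle$ (with Frobenius number $f=mn-m-n$) is what rescues the argument. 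Arguing by contradiction, suppose $hmn-a_1\notin\langle m,n\rangle$; then $w:=f-(hmn-a_1)=a_1-m-n-(h-1)mn\in\langle m,n\rangle$. Substituting $a_1=w+m+n+(h-1)mn$ into $f-P$ gives
$$f-P=\ell w+(\ell-1)(m+n)+(\ell-1)(h-1)mn,$$
and because $w$, $m+n$, and $mn$ all lie in $\langle m,n\rangle$ while $\ell\ge1$ and $h\ge1$, every summand lies in $\langle m,n\rangle$, so $f-P\in\langle m,n\rangle$. By symmetry this forces $P\notin\langle m,n\rangle$, contradicting $P\in\langle m,n\rangle$; hence (c) holds.

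For the formula I would combine Theorems~\ref{thm1} and \ref{cor1}: the maximally reduced elements are exactly $\{0,na_2,\dots,kna_2\}$ with $\dx(ina_2;S)=i+1$, so $\dx(S)=\max_{0\le i\le k}(i+1)=k+1$, and by the downward closure in Theorem~\ref{cor1} this $k+1$ is precisely the least $h$ for which $hna_2$ fails to be maximally reduced. The equivalence (a)$\Leftrightarrow$(c) then identifies that least $h$ with $\min\{h : hmn-a_1\in\langle m,n\rangle\}$ (this set is nonempty since $hmn-a_1>f$ for large $h$), yielding the stated value of $\dx(S)$.
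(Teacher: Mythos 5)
Your proof is correct and follows the same overall architecture as the paper's: (a)$\Leftrightarrow$(b) via Lemma~\ref{thm emb3 red}, (c)$\Rightarrow$(b) by the identical explicit length-$(hn+g)$ factorization with the same nonnegativity check, and (b)$\Rightarrow$(c) by extracting $hmn-\ell a_1\in\langle m,n\rangle$ from a longer factorization using $\gcd(a_1,g)=1$. The one point of genuine divergence is the descent from $\ell$ to $1$, which you correctly identify as the crux. The paper first disposes of the case $a_1\notin\langle m,n\rangle$, where symmetry applied to $a_1-m-n-(h-1)mn$ gives $hmn-a_1\in\langle m,n\rangle$ outright (without even invoking hypothesis (b)), and then in the remaining case simply adds $(\ell-1)a_1\in\langle m,n\rangle$ to $hmn-\ell a_1$. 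Your single contradiction argument via the identity $f-P=\ell w+(\ell-1)(m+n)+(\ell-1)(h-1)mn$ accomplishes the same thing without the case split, at the cost of a slightly more elaborate computation; I have checked the identity and the nonnegativity of each summand, and the argument is sound (note that it quietly uses $h\ge 1$, which is forced since (b) requires $hna_2>0$). Both deployments of the symmetry of $\langle m,n\rangle$ are legitimate and of comparable length. Your derivation of the ``moreover'' clause from Theorems~\ref{thm1} and~\ref{cor1} is also what the paper intends, and you spell out the downward-closure step more explicitly than the paper bothers to.
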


\begin{proof}
For (a) implies (b), if $(0,hn,0)$ is a maximal factorization, then so is $(h(n-m),0,hm)$. Thus $hna_2$ is maximally reduced. For (b) implies (a), if $hna_2$ is maximally reduced, then by Lemma \ref{thm emb3 red}, $(0,hn,0)$ is a maximal factorization of $hna_2$.

For (b) implies (c), we may assume that $a_1 \in \langle m,n\rangle$, since otherwise we would have $a_1-m-n - (h-1)mn \not\in \langle m,n\rangle$. By the symmetry of $\langle m,n\rangle$, it follows that $hmn-a_1 \in \langle m,n\rangle$. By assumption we have that
\begin{equation}\label{jh1}
hna_2 = c_1a_1+c_2a_2+c_3a_3
\end{equation}
where $hn < c_1+c_2+c_3$.  Write $k = hn - (c_1+c_2+c_3)$.  Subtracting $(c_1+c_2+c_3)a_1$ from both sides of (\ref{jh1}), we have
\begin{equation*}
hngm-ka_1=c_2gm+c_3gn.
\end{equation*}
Since $\gcd(a_1,g)=1$, it follows that $k$ is divisible by $g$ and so
\[c_2m+c_3n=hmn-k'a_1,\] for some $k'> 0$.  Thus $hmn-k'a_1\in \langle m,n\rangle$, and since $a_1$ is as well, we have $hmn - a_1 \in \langle m,n\rangle$.

For (c) implies (b), we essentially reverse these steps. Since $hmn - a_1 \in \langle m,n\rangle$, we have $hmn-a_1 = c_2m+c_3n $ where $c_2,c_3\ge 0$. Multiplying both sides by $g$, adding $c_2a_1+c_3a_1$ to both sides and rearranging gives $$hna_2 = (hn+g-c_1-c_2)a_1 + c_2a_2+c_3a_3.$$ If we can verify that $hn+g-c_1-c_2\ge 0$, then $(0,hn,0)$ is not a maximal factorization of $hna_2$. To do this, suppose that $hn+g-c_1-c_2 < 0$. Then, using the fact that $m<n$, we get $mhn+mg < mc_2+mc_3 < mc_2 + nc_3 = hmn-a_1$. It follows that $a_2 = a_1+mg < 0$, which is a contradiction.
\end{proof}

Before we prove the main results, we consider when $S$ is generated by three elements, $a_1$, $a_2$, and $a_3$, that do {\em not} form the minimal generating set. In this case, $S$ has embedding dimension less than three, and we have seen that $\dx(S)=1$. The next lemma shows that we still have $\dx(S) = \min\{h \st hmn - a_1 \in \langle m,n\rangle\}$.

\begin{lem}\label{lemforthm22}
Let $S$ be a numerical semigroup generated by $a_1$, $a_2$, and $a_3$ such that these element do not form the minimal generating set of $S$. Then $\min\{h \st hmn - a_1 \in \langle m,n\rangle\} = 1$, or equivalently, $mn-a_1 \in \langle m,n\rangle$.
\end{lem}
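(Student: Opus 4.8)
The plan is to exploit the symmetry of $U = \langle m,n\rangle$ to replace the membership statement $mn - a_1 \in \langle m,n\rangle$ by a more tractable non-membership statement. Since $U$ is symmetric with Frobenius number $f = mn - m - n$, exactly one of $z$ and $f - z$ lies in $U$ for each $z \in \mathbb{Z}$. Taking $z = mn - a_1$ gives $f - z = a_1 - m - n$, so $mn - a_1 \in \langle m,n\rangle$ if and only if $a_1 - m - n \notin \langle m,n\rangle$; and the latter says precisely that $a_1$ cannot be written as $pm + qn$ with $p, q \geq 1$. Thus it suffices to prove that the non-minimality of $\{a_1, a_2, a_3\}$ forces $a_1 \notin \{pm + qn : p, q \geq 1\}$. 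Granting this, the minimum in the statement equals $1$: the value $h = 0$ fails since $-a_1 < 0 \notin \langle m,n\rangle$, while $h = 1$ succeeds by the equivalence just established.

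To locate the redundancy, I first recall that the multiplicity $a_1$ is always a minimal generator, so non-minimality of $\{a_1, a_2, a_3\}$ means that $a_2$ or $a_3$ is a sum of two nonzero elements of $S$. I treat these two cases separately, using the elementary observation that any element of $S$ strictly below $a_2$ (resp. below $a_3$) is a nonnegative combination of the generators smaller than it, since the larger generators alone already exceed the bound. In the case that $a_2$ is redundant, this observation gives $a_2 = ua_1$ for some $u \geq 2$, whence $mg = (u-1)a_1$; as $\gcd(a_1, g) = 1$ this yields $a_1 \mid m$, say $m = ta_1$ with $t \geq 1$. If $a_1 = pm + qn$ with $p, q \geq 1$ were possible, then $a_1 = pta_1 + qn \geq a_1 + n > a_1$, a contradiction.

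The case in which $a_3$ is redundant is where the real work lies. Here the observation gives $a_3 = ua_1 + va_2$ with $u, v \geq 0$; substituting $a_2 = a_1 + mg$ and $a_3 = a_1 + ng$ and simplifying produces $g(n - vm) = (u + v - 1)a_1$, and $\gcd(a_1, g) = 1$ again forces $a_1 \mid (n - vm)$, say $n - vm = \delta a_1$ with $g\delta = u + v - 1 \geq 0$. A short argument rules out $\delta = 0$: it would give $u + v = 1$ together with $n = vm$, and $\gcd(m,n) = 1$ with $m < n$ then forces $m = 1$ and $v = n \geq 2$, contradicting $u + v = 1$. With $\delta \geq 1$ secured, I assume toward a contradiction that $a_1 = pm + qn$ with $p, q \geq 1$ and substitute into $n = vm + \delta a_1$ to obtain $(1 - \delta q)n = (v + \delta p)m$. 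The right-hand side is positive, so $\delta q < 1$, which is impossible for $\delta, q \geq 1$.

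I expect the last case to be the main obstacle: the clean sign argument only becomes available after extracting the divisibility relation $a_1 \mid (n - vm)$ from $a_3 = ua_1 + va_2$ and, crucially, disposing of the degenerate possibility $\delta = 0$. The first case and the symmetry reduction are routine by comparison, so the writeup will hinge on organizing the $a_3$-redundant computation carefully.
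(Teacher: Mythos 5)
Your proof is correct and follows essentially the same route as the paper's: both split into the cases $a_2 = ka_1$ and $a_3 = pa_1 + qa_2$, use $\gcd(a_1,g)=1$ to extract divisibility of $m$ (resp.\ of $n-qm$) by $a_1$, and conclude via the symmetry of $\langle m,n\rangle$. The only difference is organizational --- the paper distills both cases into the single inequality $a_1 < m+n$, so that $a_1 - m - n$ is negative and hence trivially outside $\langle m,n\rangle$, which lets it apply symmetry once at the end and avoids your substitution-and-sign argument in the second case.
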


\begin{proof}
Since $a_1$, $a_2$, and $a_3$ do not form the minimal generating set, we have either $a_2 = ka_1$ for $k\ge 2$ or $a_3 = pa_1 + qa_2$ where $p+q \ge 2$. In both case we can show that $a_1 < m+n$.

For the former case, we have $a_1 + gm = ka_1$. Thus $gm = (k-1)a_1$, and since $(a_1,g)=1$, it follows that $m = k'a_1$, where $1 \le k'=(k-1)/g \in \mathbb Z$. Therefore, $a_1 < m+n$. For the latter case, we have $a_1+ gn = pa_1 + q(a_1 + gm)$. Thus $g(n-qm) = (p+q-1)a_1$, and since $(a_1,g)=1$, it follows that $(n-qm) = k'a_1$, where $1\le k'=(p+q-1)/g \in \mathbb Z$. Therefore, $a_1 < m+n$.

Notice that if $a_1 < m+n$, then $a_1 - m-n \not\in \langle m,n\rangle$. By the symmetry of $\langle m,n\rangle$, we have $mn-m-n-(a_1-m-n) =mn-a_1 \in \langle m,n\rangle$.
\end{proof}

%\begin{thm}\label{thm2}
%If $\ds \left\lceil\frac{a_1}{mn}\right\rceil mn - a_1 \in \langle m,n\rangle$, then $\ds \dx(S) = \left\lceil\frac{a_1}{mn}\right\rceil$; otherwise, $\ds \dx(S) = \left\lceil\frac{a_1}{mn}\right\rceil+1.$
%\end{thm}

Next, we prove our main results with the following setting: The numerical semigroup $S$ is (perhaps non-minimally) generated by $a_1$, $a_2$, and $a_3$. Moreover, we set $g=\gcd(a_2-a_1,a_3-a_1)$,  $m=(a_2-a_1)/g$ and $n=(a_3-a_1)/g$ such that
\begin{eqnarray}\label{set} S=\langle a_1,a_1+mg,a_1+ng\rangle,\end{eqnarray}
where  {$\gcd(m,n)=\gcd(a_1,g)=1$.}

\begin{thm}\label{thm2}
Let $0\le \alpha < mn$ such that $\alpha \equiv -a_1  \,\,\mathrm{mod} \,\,mn$. We have the following formula:
$$ \dx(S) = \begin{cases} \ds
\left\lceil\frac{a_1}{mn}\right\rceil, & \ds  \text{if } \alpha \in  \langle m,n\rangle \\ \\ \ds
\left\lceil\frac{a_1}{mn}\right\rceil +1, & \text{otherwise.}
\end{cases}$$
\end{thm}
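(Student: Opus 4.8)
The plan is to reduce the entire statement to the arithmetic characterization
$\dx(S) = \min\{h \st hmn - a_1 \in \langle m,n\rangle\}$ supplied by the ``Moreover'' part of Lemma~\ref{lemforthm2}, and then to locate the smallest admissible $h$ by tracking residues modulo $mn$. First I would translate the hypothesis on $\alpha$: since $\alpha \equiv -a_1 \pmod{mn}$ with $0 \le \alpha < mn$, we have $a_1 + \alpha = c\,mn$ for a positive integer $c$, and I would check that this $c$ is exactly $\lceil a_1/mn\rceil$. Indeed, if $\alpha = 0$ then $mn \mid a_1$ and $c = a_1/mn$, while if $0 < \alpha < mn$ then $(c-1)mn < a_1 < c\,mn$, which forces $\lceil a_1/mn\rceil = c$. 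This identification is the bridge between the minimization problem and the two branches of the claimed formula.

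The central computation is to rewrite the quantity whose membership we are testing as
$$hmn - a_1 = \alpha + (h - c)mn.$$
From here the minimization splits into three transparent ranges. For $h \le c-1$ the right-hand side is at most $\alpha - mn < 0$, hence never lies in the nonnegative semigroup $\langle m,n\rangle$; this already yields $\dx(S) \ge c$. For $h = c$ the value is exactly $\alpha$, so $h = c$ is admissible precisely when $\alpha \in \langle m,n\rangle$. For $h = c+1$ the value is $\alpha + mn$, and since $\alpha \ge 0$ we have $\alpha + mn \ge mn > mn - m - n = f$, the Frobenius number of $\langle m,n\rangle$; as every integer exceeding $f$ lies in the semigroup, $h = c+1$ is always admissible.

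Assembling these observations gives the theorem: if $\alpha \in \langle m,n\rangle$ then the minimum is attained at $h = c$, so $\dx(S) = c = \lceil a_1/mn\rceil$; otherwise the smallest admissible value is $h = c+1$, so $\dx(S) = \lceil a_1/mn\rceil + 1$. I do not anticipate a genuine obstacle, since the only structural input is the value $f = mn - m - n$ (equivalently, the symmetry of $\langle m,n\rangle$ recorded before Lemma~\ref{lemforthm2}): instead of the inequality above, one could note that $f - (\alpha + mn) = -(m + n + \alpha) < 0$ is not in $\langle m,n\rangle$ and invoke symmetry to place $\alpha + mn$ in the semigroup. The only points deserving care will be the bookkeeping identifying $c$ with $\lceil a_1/mn\rceil$ and the verification that no $h$ strictly below $c$ can work, both of which are settled directly by the displayed rewriting.
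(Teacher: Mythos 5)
Your proposal is correct and follows essentially the same route as the paper: both reduce to the characterization $\dx(S)=\min\{h \st hmn-a_1\in\langle m,n\rangle\}$ from Lemma~\ref{lemforthm2} (the paper also cites Lemma~\ref{lemforthm22} for the non-minimally generated case), identify $\alpha=\lceil a_1/mn\rceil mn-a_1$, and dispose of the three ranges of $h$ by negativity, direct testing of $\alpha$, and the Frobenius bound $mn-m-n$. The only cosmetic difference is your intermediate variable $c$ and the slightly more explicit symmetry/Frobenius justification for the $h=c+1$ case.
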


\begin{proof}
First note that $\ds \alpha = \left\lceil\frac{a_1}{mn}\right\rceil mn - a_1$ and $0 \le \ds \left\lceil\frac{a_1}{mn}\right\rceil mn - a_1 < mn$. Thus if $h<  \ds \left\lceil\frac{a_1}{mn}\right\rceil$, then $ hmn - a_1 \notin \langle m,n\rangle$. On the other hand, if $h>  \ds \left\lceil\frac{a_1}{mn}\right\rceil$, then $ hmn - a_1 > mn$ and thus $hmn-a_1$ is an element of $\langle m,n\rangle$.

The result now follows from Lemmas \ref{lemforthm2} and \ref{lemforthm22}. If $\ds\left\lceil\frac{a_1}{mn}\right\rceil mn - a_1 \in \langle m,n\rangle$, then $\ds\dx(S) = \min\{h \st hmn - a_1 \in \langle m,n\rangle\} = \left\lceil\frac{a_1}{mn}\right\rceil$. Otherwise, $\ds\dx(S) = \min\{h \st hmn - a_1 \in \langle m,n\rangle\} = \left\lceil\frac{a_1}{mn}\right\rceil+1$.

%If $\ds \left\lceil\frac{a_1}{mn}\right\rceil mn - a_1 \in \langle m,n\rangle$, then $\ds \left(\left\lceil\frac{a_1}{mn}\right\rceil -1\right)n$ is the largest maximally reduced element of $S$, and hence $\ds \dx(S) = \left\lceil\frac{a_1}{mn}\right\rceil$. Otherwise $\ds \left\lceil\frac{a_1}{mn}\right\rceil n$ is the largest maximally reduced element of $S$, and hence $\ds \dx(S) = \left\lceil\frac{a_1}{mn}\right\rceil +1$.
\end{proof}

\begin{thm}\label{thm3}
If $x$ and $y$ are integers such that $mx+ny = a_1$, then
$$\dx(S) = \left\lceil\frac{x}{n}\right\rceil + \left\lceil\frac{y}{m}\right\rceil.$$
\end{thm}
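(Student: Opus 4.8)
Theorem 3.6 gives a formula for $\dx(S)$ in terms of any integer solution $(x,y)$ to $mx + ny = a_1$. I have Theorem 3.5 (Theorem \ref{thm2}) available, which already computes $\dx(S)$ as either $\lceil a_1/(mn)\rceil$ or that plus one, depending on whether $\alpha \in \langle m,n\rangle$ where $\alpha \equiv -a_1 \pmod{mn}$, $0 \le \alpha < mn$. So the natural plan is to **derive Theorem 3.6 from Theorem 3.5** by re-expressing both the ceiling term and the symmetric-semigroup membership condition in terms of $(x,y)$.

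**Key relationships to establish.** Let me think about what $mx + ny = a_1$ tells us. Since $\gcd(m,n)=1$, the general solution is $(x + kn, y - km)$ for $k \in \mathbb{Z}$. I should check the formula is independent of the choice of solution: replacing $(x,y)$ by $(x+n, y-m)$ changes $\lceil x/n\rceil$ to $\lceil x/n\rceil + 1$ and $\lceil y/m\rceil$ to $\lceil y/m\rceil - 1$, so the sum is invariant — good, this is a sanity check I'd want to record.

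Now, the standard fact about the symmetric (numerical) semigroup $\langle m,n\rangle$ with Frobenius number $f = mn-m-n$: a nonnegative reduced representative is in $\langle m,n\rangle$ iff it can be written $um+vn$ with $u,v \ge 0$. More useful here is the characterization via the solution $(x,y)$: from $mx+ny = a_1$, I get $\alpha \equiv -a_1 \equiv -(mx+ny) \pmod{mn}$. The cleanest route is to relate $\lceil a_1/(mn)\rceil$ to $x,y$ directly. Writing $a_1 = mx + ny$, I expect to show $\lceil a_1/(mn)\rceil = \lceil x/n\rceil + \lceil y/m\rceil$ **exactly when $\alpha \in \langle m,n\rangle$**, and that the sum of ceilings equals $\lceil a_1/(mn)\rceil + 1$ otherwise. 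This would make Theorem 3.6 fall out of Theorem 3.5 immediately.

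**The plan in steps.** First I would fix a particular convenient solution, e.g. the one with $0 \le x < n$ (so $\lceil x/n\rceil \in \{0,1\}$), reducing the problem to a clean case, then invoke invariance to conclude in general. With $0 \le x < n$, I'd analyze the sign and size of $y = (a_1 - mx)/n$ and compute $\lceil y/m\rceil$. Second, I'd connect to $\alpha$: since $mx + ny = a_1$ and $\alpha \equiv -a_1 \pmod{mn}$ with $0\le \alpha < mn$, I would write $\alpha = mn - r$ where $r$ is the residue of $a_1 \bmod mn$ (when $mn \nmid a_1$), and translate "$\alpha \in \langle m,n\rangle$" into a statement about whether a remainder can be expressed with nonnegative coefficients. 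The membership $\alpha \in \langle m,n\rangle$ should correspond precisely to whether the "fractional parts" of $x/n$ and $y/m$ force an extra $+1$ in the ceiling sum versus $\lceil a_1/(mn)\rceil$. Third, I'd combine: in the case $\alpha \in \langle m,n\rangle$ show $\lceil x/n\rceil + \lceil y/m\rceil = \lceil a_1/(mn)\rceil$, and in the complementary case show it equals $\lceil a_1/(mn)\rceil + 1$, then cite Theorem \ref{thm2}.

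**Main obstacle.** The delicate part will be the bookkeeping in Step 2 — translating the abstract membership condition $\alpha \in \langle m,n\rangle$ into the exact arithmetic relationship between $\lceil x/n\rceil + \lceil y/m\rceil$ and $\lceil a_1/(mn)\rceil$. The identity I'm chasing is essentially a "three-distance / Popoviciu-type" statement about how $\lfloor mx/n \rfloor$-style floors interact, and the symmetric-semigroup dichotomy ("exactly one of $z$ or $f-z$ lies in $U$") is what controls the single $+1$. I expect the key lemma to be: for $mx + ny = a_1$ with $mn \nmid a_1$, one has $\lceil x/n\rceil + \lceil y/m\rceil = \lceil a_1/(mn)\rceil + 1 - [\,\alpha \in \langle m,n\rangle\,]$, proved by reducing to $0 \le x < n$ and carefully tracking the residue $a_1 \bmod mn$ against its representation in $\langle m,n\rangle$. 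Handling the boundary case $mn \mid a_1$ (where $\alpha = 0 \in \langle m,n\rangle$ automatically) separately will be a small but necessary check.
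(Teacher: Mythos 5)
Your proposal is correct in outline, but it takes a genuinely different route from the paper. The paper does not go through Theorem \ref{thm2} at all: after the same invariance check you propose (replacing $(x,y)$ by $(x+kn,\,y-km)$ leaves the sum of ceilings unchanged), it sets $k=\dx(S)$ and invokes Lemmas \ref{lemforthm2} and \ref{lemforthm22} to write $k=\min\{h \st hmn-a_1\in\langle m,n\rangle\}$. Minimality then forces the representation $kmn-a_1=c_1m+c_2n$ to have $0\le c_1<n$ and $0\le c_2<m$ (otherwise $h-1$ would already work), and evaluating the formula at the particular solution $(x,y)=(kn-c_1,\,-c_2)$ gives $\left\lceil\frac{kn-c_1}{n}\right\rceil+\left\lceil\frac{-c_2}{m}\right\rceil=k+0=k$ immediately --- no case analysis needed. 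Your route instead passes through Theorem \ref{thm2} and requires the standalone arithmetic identity $\left\lceil\frac{x}{n}\right\rceil+\left\lceil\frac{y}{m}\right\rceil=\left\lceil\frac{a_1}{mn}\right\rceil+1-[\alpha\in\langle m,n\rangle]$. That identity is true and provable by the residue bookkeeping you describe (normalize to $-n<x\le 0$ or $0\le x<n$ and split on the signs and vanishing of the two remainders; the only nontrivial case is showing $mc_1-nr\notin\langle m,n\rangle$ when $0<c_1<n$ and $0<r<m$, which follows from $\gcd(m,n)=1$), so your plan would go through. What your approach buys is an explicit, purely arithmetic equivalence between the two formulas of Theorems \ref{thm2} and \ref{thm3}, independent of any semigroup-theoretic input beyond Theorem \ref{thm2} itself; what it costs is precisely the case analysis you flag as the main obstacle, which the paper sidesteps by letting the minimality of $h$ hand it a canonical solution at which both ceilings are trivial to evaluate.
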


\begin{proof}
Note that $a_1 = mx+ny = mu+nv$ implies that $m(x-u)=n(v-y)$.  Since $\gcd(m,n)=1$, we have that $x-u=kn$ and $v-y=k m$ for some integer $k$.

%First, $a_1=mu+nv$ for some integers $u$ and $v$ if and only if $u=x+kn$ and $v=y-km$ for some integer $k$.
Thus we see that
\begin{align*} \left\lceil\frac{u}{n}\right\rceil + \left\lceil\frac{v}{m}\right\rceil &= \ds\left\lceil\frac{x+kn}{n}\right\rceil + \left\lceil\frac{y-k m}{m}\right\rceil\\
&= \ds\left\lceil\frac{x}{n}+k\right\rceil + \left\lceil\frac{y}{m}-k\right\rceil\\
&= \ds\left\lceil\frac{x}{n}\right\rceil+k + \left\lceil\frac{y}{m}\right\rceil-k\\
&= \ds\left\lceil\frac{x}{n}\right\rceil+ \left\lceil\frac{y}{m}\right\rceil.
\end{align*}
In other words, the formula is independent of the linear combination that we choose.

Now let $k = \dx(S)$, so that,  by Lemmas \ref{lemforthm2} and \ref{lemforthm22}, we have $k=\min\{h \st hmn - a_1 \in \langle m,n\rangle\}$. Thus, $kmn-a_1 = c_1m + c_2n$ for some $c_1,c_2 \ge 0$. Furthermore, $c_1 < n$ and $c_2 < m$ since otherwise, $(k-1)mn - a_1 \in \langle m,n\rangle$. We now have that
\begin{eqnarray*}
\left\lceil\frac{x}{n}\right\rceil + \left\lceil\frac{y}{m}\right\rceil &=& \left\lceil\frac{kn-c_1}{n}\right\rceil + \left\lceil\frac{-c_2}{m}\right\rceil\\
&=& k + \left\lceil\frac{-c_1}{n}\right\rceil + \left\lceil\frac{-c_2}{m}\right\rceil\\
&=& k\\
&=& \dx(S).
\end{eqnarray*}\end{proof}

Of course, the formulas presented in Theorems \ref{thm2} and \ref{thm3} are most interesting when $S$ has embedding dimension three, since otherwise, we know that $\dx(S) =1$. However, the fact that these formulas work for all numerical semigroups with embedding dimension less than four naturally raises the following question:

\begin{ques}
When $S$ has embedding dimension less than $t+1$, where $t\ge 4$, do there exist formulas dependent upon a set of $t$ generators analogous to those in Theorems \ref{thm2} and \ref{thm3} that yield the maximal denumerant of $S$?
\end{ques}

\

\section{The maximal denumerant of basic semigroups}

We begin with the following proposition that will aid in some computations.

\begin{prop}\label{propex1}
Let $S=\langle a_1,a_2,a_3\rangle$ be a semigroup. If either $m$ or $n$ divides $a_1$, then $\dx(S) = \left\lceil\frac{a_1}{mn}\right\rceil$.
\end{prop}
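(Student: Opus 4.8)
The plan is to invoke Theorem~\ref{thm2} directly, since it already reduces the computation of $\dx(S)$ to a single membership question. With $\alpha$ the unique integer in $[0,mn)$ satisfying $\alpha \equiv -a_1 \pmod{mn}$, that theorem tells us $\dx(S) = \left\lceil a_1/(mn)\right\rceil$ exactly when $\alpha \in \langle m,n\rangle$, and $\dx(S) = \left\lceil a_1/(mn)\right\rceil + 1$ otherwise. So to obtain the stated value it suffices to show that the hypothesis ``$m$ or $n$ divides $a_1$'' forces $\alpha \in \langle m,n\rangle$.

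First I would treat the case $m \mid a_1$. Reducing the congruence $\alpha \equiv -a_1 \pmod{mn}$ modulo $m$ gives $\alpha \equiv -a_1 \equiv 0 \pmod m$, so $\alpha$ is a nonnegative multiple of $m$; say $\alpha = jm$ with $j \ge 0$ (the bound $\alpha < mn$ further gives $0 \le j < n$, though this is not needed). Every nonnegative multiple of a generator lies in the semigroup, so $\alpha \in \langle m,n\rangle$.

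The case $n \mid a_1$ is entirely symmetric: reducing $\alpha \equiv -a_1$ modulo $n$ yields $\alpha \equiv 0 \pmod n$, whence $\alpha$ is a nonnegative multiple of $n$ and again $\alpha \in \langle m,n\rangle$. In either case $\alpha \in \langle m,n\rangle$, and Theorem~\ref{thm2} delivers $\dx(S) = \left\lceil a_1/(mn)\right\rceil$.

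I do not expect a genuine obstacle here, as the content is the elementary observation that a nonnegative multiple of $m$ (respectively $n$) automatically belongs to $\langle m,n\rangle$; only the nonnegativity of $\alpha$, guaranteed by its being chosen in $[0,mn)$, is essential. The one point requiring care is bookkeeping with the congruence: tracking the sign of $-a_1$ and confirming that $\alpha$ is taken as the canonical nonnegative residue, so that the multiple $jm$ (or $jn$) really has $j \ge 0$ and hence lies in the numerical semigroup rather than merely in the subgroup it generates.
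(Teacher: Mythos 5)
Your argument is correct, but it routes through Theorem~\ref{thm2} rather than Theorem~\ref{thm3}, which is what the paper actually uses. The paper's proof is a one-line computation: if $m \mid a_1$, write $a_1 = km$, take $x=k$, $y=0$ in Theorem~\ref{thm3}, and get $\dx(S) = \left\lceil\frac{k}{n}\right\rceil + \left\lceil\frac{0}{m}\right\rceil = \left\lceil\frac{km}{nm}\right\rceil = \left\lceil\frac{a_1}{mn}\right\rceil$, with the symmetric choice $x=0$, $y=k$ when $n \mid a_1$. Your version instead verifies that the canonical residue $\alpha$ of $-a_1$ modulo $mn$ is a nonnegative multiple of $m$ (resp.\ $n$) and hence lies in $\langle m,n\rangle$, then reads off the first branch of Theorem~\ref{thm2}. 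Both theorems are available at this point in the paper and both ultimately rest on the same characterization $\dx(S) = \min\{h \st hmn - a_1 \in \langle m,n\rangle\}$ from Lemma~\ref{lemforthm2}, so neither route is circular or more general than the other; the paper's choice avoids any discussion of residues by exploiting that the divisibility hypothesis hands you an explicit linear combination $mx+ny=a_1$ with one coordinate zero, while yours makes the membership condition of Theorem~\ref{thm2} transparent. Your care about the sign and nonnegativity of $\alpha$ is exactly the right point to watch, and it is handled correctly.
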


\begin{proof}
Assume that $m$ divides $a_1$. Then $a_1 = km $ for some $k>0$, and by Theorem \ref{thm3} we have $\dx(S) = \left\lceil\frac{k}{n}\right\rceil = \left\lceil\frac{km}{nm}\right\rceil =\left\lceil\frac{a_1}{mn}\right\rceil$. We have a similar proof whenever $n$ divides $a_1$.
\end{proof}

Recall our setting: $S=\langle a_1,a_1+mg,a_1+ng\rangle$, where $g=\gcd(a_2-a_1,a_3-a_1)$,  $m=(a_2-a_1)/g$ and $n=(a_3-a_1)/g$. From Theorems \ref{thm2} and \ref{thm3}, we see that if $T=\langle a_1,a_1+m,a_1+n\rangle$, then $\dx(S)=\dx(T)$. Thus, we will restrict our attention to the following class of numerical semigroups.

\begin{defn}
The semigroup $S=\langle a_1,a_2,a_3\rangle$ is called {\em basic} if $\gcd(a_2-a_1,a_3-a_1) = 1$.
\end{defn}

%The following proposition is not exhaustive, but for a given value of $a_1$ covers all but finitely many cases.

\begin{prop}\label{propex2}
Let $S=\langle a_1,a_2,a_3\rangle$ be a basic semigroup. Then
\begin{enumerate}
\item If $4a_1 = 2a_2+a_3$, then $\dx(S) = 2$
\item If $3a_1= a_2+a_3$, then $\dx(S) = 2$.
\item If $4a_1 <  2a_2+a_3$ and $3a_1 \ne a_2+a_3$, then $\dx(S) = 1$
\end{enumerate}
\end{prop}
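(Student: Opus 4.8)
The plan is to translate all three hypotheses into the coordinates $m = a_2 - a_1$ and $n = a_3 - a_1$ (recall that $g = 1$ since $S$ is basic, and $1 \le m < n$, so $n \ge 2$) and then apply the formulas already established. Writing $a_2 = a_1 + m$ and $a_3 = a_1 + n$, one checks that $2a_2 + a_3 = 3a_1 + 2m + n$ and $a_2 + a_3 = 2a_1 + m + n$, so the three conditions become $a_1 = 2m + n$ (part 1), $a_1 = m + n$ (part 2), and $a_1 < 2m + n$ together with $a_1 \ne m + n$ (part 3). This normalization is the key first move; once it is in place, each case is short.

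For parts 1 and 2 I would invoke Theorem~\ref{thm3}, exploiting that its formula is independent of the chosen integer solution of $mx + ny = a_1$. In part 2, the choice $x = y = 1$ solves $m\cdot 1 + n\cdot 1 = m + n = a_1$, and since $m,n \ge 1$ both ceilings equal $1$, giving $\dx(S) = \lceil 1/n\rceil + \lceil 1/m\rceil = 2$. In part 1, the choice $x = 2,\, y = 1$ solves $2m + n = a_1$, and because $n \ge 2$ we have $\lceil 2/n\rceil = 1$ while $\lceil 1/m\rceil = 1$, so again $\dx(S) = 2$.

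For part 3 I would instead use the characterization $\dx(S) = \min\{h \st hmn - a_1 \in \langle m,n\rangle\}$ from Lemmas~\ref{lemforthm2} and~\ref{lemforthm22}, which shows that $\dx(S) = 1$ precisely when $mn - a_1 \in \langle m,n\rangle$. By the symmetry of $\langle m,n\rangle$, whose Frobenius number is $f = mn - m - n$, this holds if and only if $f - (mn - a_1) = a_1 - m - n \notin \langle m,n\rangle$, so it remains to verify the latter. The hypotheses give $a_1 - m - n < m$ (from $a_1 < 2m + n$) and $a_1 - m - n \ne 0$ (from $a_1 \ne m + n$); since the only element of $\langle m,n\rangle$ strictly below $m$ is $0$, such a value cannot lie in $\langle m,n\rangle$, whether it is negative or strictly between $0$ and $m$. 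Hence $\dx(S) = 1$.

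I do not anticipate a serious obstacle, as the content lies entirely in selecting the right tool for each case. The only point requiring care is part 3, where one should resist applying Theorem~\ref{thm2} directly, since the value of $\lceil a_1/(mn)\rceil$ is mildly delicate near the boundary (for instance when $m = 1$), and instead pass through the symmetry of $\langle m,n\rangle$; the sign analysis of $a_1 - m - n$ is then the one genuinely necessary verification.
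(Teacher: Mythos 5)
Your proposal is correct and follows essentially the same route as the paper: parts 1 and 2 via Theorem~\ref{thm3} with the explicit solutions $(x,y)=(2,1)$ and $(1,1)$, and part 3 via the characterization $\dx(S)=1 \iff mn-a_1\in\langle m,n\rangle$ together with the symmetry of $\langle m,n\rangle$. The only cosmetic difference is that you merge the paper's two subcases of part 3 ($a_1<m+n$ versus $m+n<a_1<2m+n$) into the single observation that $a_1-m-n$ is nonzero and less than $m$, and you make the inequality $n\ge 2$ explicit in part 1.
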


\begin{proof}
For the first case, by subtracting $3a_1$, we have $a_1 = 2m+n$. Thus, $\dx(S) = \left\lceil\frac{2}{n}\right\rceil + \left\lceil\frac{1}{m}\right\rceil$. Since $1\le m<n$, we have $\dx(S) = 2$. Similarly, for the second case, we obtain $a_1=m+n$. Thus $\dx(S) = \left\lceil\frac{1}{n}\right\rceil + \left\lceil\frac{1}{m}\right\rceil =2.$

For the third case, consider when $a_1<m+n$. Then $a_1-m-n <0$, and hence, is not in $\langle m,n\rangle$. By the symmetry of $S$, we have $mn-m-n -(a_1-m-n) = mn-a_1 \in \langle m,n\rangle$. By Lemma \ref{lemforthm2}, $\dx(S) = 1$. On the other hand, if we have $m+n < a_1 < 2m+n$, then $0< a_1-m-n < m$. Again we have that $a_1-m-n \not\in \langle m,n\rangle$, and it follows that $\dx(S) = 1$.
\end{proof}
\begin{rem}
  Note that Proposition \ref{propex2} is not exhaustive in the sense that when $4a_1>2a_2+a_3$, there are more possibilities to consider. Nevertheless, for a given $a_1$, Proposition \ref{propex2} does address all but finitely many situations.
\end{rem}
%\begin{proof}
%since we have $a_1 < m+n$, in the equation $mx+ny = a_1$ we can assume that $0\le y<m$ and $x < 0$. Thus we have $\left\lceil\frac{x}{n}\right\rceil + \left\lceil\frac{y}{m}\right\rceil = 1$.
%\end{proof}

The next example demonstrates how we can easily compute the maximal denumerant of all basic semigroups with a fixed multiplicity.

\begin{ex}
Let $S=\langle 7,a_2,a_3\rangle$ be a basic semigroup with multiplicity $a_1=7$. Using Proposition \ref{propex2}, we carry out the following steps:
\begin{enumerate}
\item Solve $28 = 2a_2 + a_3$ to get the pairs of solutions $(8,12)$ and $(9,10)$.
\item Solve $21 = a_2+a_3$ to get the pairs of solutions $(8,13)$, $(9,12)$, and $(10,11)$.
\item Solve $4a_1 > 2a_2+a_3$ to get the pairs of solutions $(8,9)$, $(8,10)$, and $(8,11)$.
\end{enumerate}

The semigroups from the first two steps will have maximal denumerant equal to 2. The maximal denumerant of the semigroups in the third step can be computed using Proposition \ref{propex1}. All other basic semigroups with multiplicity 7 have maximal denumerant equal to 1. The list below summarizes our computations.

\begin{enumerate}
\item $\dx(\langle 7,8,9\rangle) = 4$
\item $\dx(\langle 7,8,10\rangle) = 3$
\item $\dx(S) = 2$ if $S$ is one of the following:
\begin{enumerate}
\item $\langle 7,8,11\rangle$
\item $\langle 7,8,12\rangle$
\item $\langle 7,9,10\rangle$
\item $\langle 7,8,13\rangle$
\item $\langle 7,9,12\rangle$
\item $\langle 7,10,11\rangle$
\end{enumerate}
\item $\dx(S) = 1$ otherwise.
\end{enumerate}
\end{ex}

This example raises a natural question: for which values of $m$ and $n$ is $\dx(\langle a_1, a_1+m, a_1+n \rangle)$ maximized?  Considering Theorem 3.5, if $m=1$, then $\langle m, n\rangle = \N$, and $\dx(S) = \lceil a_1/n \rceil$.  This is maximized when $n=2$.  If $m>1$, then since $a_1 \geq 3$ we have $\lceil a_1/mn \rceil +1 \le \lceil a_1/2 \rceil$.  Hence $\dx(S)$ is largest when $m=1$ and $n=2$.

\end{document}